\newtheorem{theorem}{Theorem}[section]
\newtheorem{lemma}[theorem]{Lemma}
\newtheorem{proposition}[theorem]{Proposition}
\newtheorem{corollary}[theorem]{Corollary}
\theoremstyle{definition}
\newtheorem{definition}[theorem]{Definition}
\newtheorem{example}[theorem]{Example}
\newtheorem{remark}[theorem]{Remark}
\newcommand{\Tr}{\text{Tr}}
\newcommand{\id}{\text{id}}
\newcommand{\ben}{\begin{enumerate}}
\newcommand{\een}{\end{enumerate}}
\theoremstyle{plain}
\newtheorem*{sol}{Solution}
\theoremstyle{definition}
\theoremstyle{remark}
\newcommand{\solu}[1]{\begin{sol}{\bf (\ref{#1})}}
\begin{document}

\title[Finite dimensional Hopf actions on quantizations]{Finite dimensional Hopf actions on deformation quantizations}

\author{Pavel Etingof}
\address{Department of Mathematics, Massachusetts Institute of Technology,
Cambridge, MA 02139, USA}
\email{etingof@math.mit.edu}

\author{Chelsea Walton}
\address{Department of Mathematics, Temple University, Philadelphia, Pennsylvania 19122
USA}\email{notlaw@temple.edu}

\subjclass[2010]{16T05, 16S80, 17B63, 16W70}
\keywords{
deformation quantization, filtered deformation, Hopf algebra action, Poisson center}

\maketitle

\begin{abstract}
We study when a finite dimensional Hopf action on a quantum formal deformation $A$ of a commutative domain $A_0$ (i.e., a deformation quantization) must factor through a group algebra. In particular, we show that this occurs when the Poisson center of the fraction field of $A_0$ is trivial.
\end{abstract}

\section{Introduction} 

Throughout the paper, we will work over an algebraically closed field $k$ of characteristic zero. 
Let us say that an associative algebra $B$ has {\it No Finite Quantum Symmetry} ({\sf NFQS}) 
if any action of a finite dimensional Hopf algebra $H$ on $B$ factors through a group algebra, and 
has {\it No Semisimple Finite Quantum Symmetry} ({\sf NSFQS}) if this holds for semisimple Hopf actions. 
In previous papers (\cite{CEW1, CEW2, EGMW, EW1}), we and coauthors established 
these properties for various classes of algebras. In particular, in \cite{EW1} we 
proved the {\sf NSFQS} property when $B=:A_0$ is a commutative domain. 

The aim of this work is to investigate when these properties hold for Hopf actions on {\it quantum formal deformations} $A$ of a commutative domain $A_0$. To do so, we use the Poisson structure on $A_0$ and on its fraction field $Q(A_0)$, which are induced by the multiplication of $A$. 
Namely, we show that if the Poisson center of $Q(A_0)$ is trivial, then the {\sf NFQS} property holds. We summarize our main results in the table below, along with recalling related results in the literature.

\medskip

\begin{center}
\footnotesize{
\tabulinesep=1mm
\begin{tabu}{|c|c|c|c|c|}
\hline
Property & module algebra $B$ &
 \begin{tabular}{c}$H \curvearrowright B$ preserves\\ filtration of $B$? \end{tabular}& 
 \begin{tabular}{c}Poisson center \\of $Q(A_0)$ triv.? \end{tabular}
 & Reference\\
 \hline
 \hline
 {\sf NSFQS} & $A_0$ (commutative domain)  & not required & not required & \cite[Thm~1.3]{EW1}\\
{\sf NSFQS} & filtered deformation $\widetilde{A}$ of $A_0$  & sufficient & not required& \cite[Prop~5.4]{EW1}\\
 {\sf NFQS} & ${\bf A}_n(k)$ (Weyl algebra)  & not required & not required & \cite[Thm~1.1]{CEW2}\\
 {\sf NSFQS} & ${\bf A}_n(k[z_1,\dots,z_s])$  & not required & not required & \cite[Prop~4.3]{CEW1}\\
 {\sf NFQS} & $D(X)$ (algebra of diff'l ops)  & not required & not required & \cite[Thm~1.2]{CEW2}\\
 \hline
 \hline 
 {\sf NSFQS} & quantum deform'n $A$ of $A_0/_{k[[\hbar]]}$  & not required & not required & {\bf Proposition~\ref{ssresult}}\\
{\sf NFQS} & quantum deform'n $A$ of $A_0/_{k[[\hbar]]}$  & not required & sufficient & {\bf Theorem~\ref{maint}}\\
{\sf NFQS} & filtered deformation  $\widetilde{A}$ of $A_0$  & sufficient & sufficient & {\bf Corollary~\ref{corol}}\\
 \hline
\end{tabu}
}
\end{center}
\medskip

\begin{center} 
\footnotesize{\textsc{Table 1}. Various  settings for  No (Semisimple) Finite Quantum Symmetry, including our {\bf main results} here}
\end{center}


\section{Preliminaries} \label{prelim}
In this section, we recall the basic terminology pertaining to deformations of $k$-algebras, including {\it quantum deformations} of commutative algebras. We also discuss localizations of such quantum deformations. The section ends with material on {\it inner-faithful} Hopf actions.

\subsection{Deformations} Let us introduce the following definitions.

\begin{definition}[$A$, $A_N$] \label{def1} Let $A_0$ be an arbitrary $k$-algebra.
\begin{enumerate} 
\item[(a)]  A {\it (flat) formal deformation} of $A_0$ is a $k[[\hbar]]$-algebra $A$ which is topologically free over $k[[\hbar]]$ (i.e., $A \cong A_0[[\hbar]]$ as $k[[\hbar]]$-modules) and equipped with an algebra isomorphism $A/\hbar A\cong A_0$.  

\item[(b)] Given an non-negative integer $N$,  we say that a {\it (flat) $N$-th order deformation} of 
$A_0$ is a $k[\hbar]/(\hbar^{N+1})$-algebra $A_N$ which is free 
as a $k[\hbar]/(\hbar^{N+1})$-module and 
equipped with an algebra isomorphism $A_N/\hbar A_N \cong A_0$.  

\item[(c)] If, further, $A_0$ is a commutative $k$-algebra, then the not necessarily commutative algebras $A$ and $A_N$ above are referred to as {\it quantum deformations} of $A_0$.
\end{enumerate}
\end{definition}

Clearly, if $A$ is a formal deformation of $A_0$, then $A/\hbar^{N+1}A$ is an $N$-th order deformation of $A_0$ for any $N\ge 0$, and $A=\underleftarrow{\rm lim} (A/\hbar^{N+1}A)$. Thus, formal deformations may be viewed as {\it deformations of infinite order}. 

Given  a  Hopf algebra $H_0$,  a {\it formal deformation} $H$ and an {\it $N$-th order deformation} $H_N$ of $H_0$ are defined similarly to Definition~\ref{def1}. 
\smallskip

\begin{definition}[$\widetilde{A}$]  \label{def2} Let $A_0$ be a graded $k$-algebra.
A $\Bbb Z_+$-filtered algebra $\widetilde{A} = \bigcup_{n \geq 0} F^n \widetilde{A}$ is a {\it $\Bbb Z_+$-filtered deformation} of $A_0$ if we are given an isomorphism gr$_F\widetilde{A} \cong A_0$ as graded $k$-algebras. (The algebra $\widetilde{A}$ is also called a {\it PBW deformation} of $A_0$.)
\end{definition}

Any $\Bbb Z_+$-filtered deformation $\widetilde{A}=\bigcup_{n \geq 0} F^n \widetilde{A}$ 
of a graded algebra $A_0$ gives rise to its formal deformation via the Rees algebra construction.
 
\begin{definition}[$R(\widetilde{A})$, $\widehat{R}(\widetilde{A})$] With the notation above, the {\it Rees algebra} $R(\widetilde{A})$ is $\bigoplus_{n\ge 0}\hbar^n F^n\widetilde{A}$ and the {\it completed Rees algebra} 
$\widehat{R}(\widetilde{A})$ is $\prod_{n\ge 0}\hbar^n F^n\widetilde{A}$. 
\end{definition}

Clearly, $R(\widetilde{A})$ carries a grading, and is the span of the homogeneous elements of $\widehat{R}(\widetilde{A})$.  
Thus, $A:=\widehat{R}(\widetilde{A})$ is a homogeneous formal deformation of $A_0$ with $\deg(\hbar)=1$. 
Note also that $\widetilde{A}$ with its filtration can be recovered from $R(\widetilde{A})$ by the formula $\widetilde{A}=R(\widetilde{A})/(\hbar-1)$. 
In fact, any homogeneous formal deformation $A$ of $A_0$ gives rise to a $\Bbb Z_+$-filtered deformation via 
$\widetilde{A}=A_{\rm hom}/(\hbar - 1)$, where $A_{\rm hom}$ is the span of the homogeneous elements of $A$.  

\smallskip

Now take $A_0$ to be a commutative $k$-algebra. Suppose $A$ is a quantum $N$-th order deformation of $A_0$ for $1\le N\le \infty$. 
Define the bilinear map $\lbrace{\hspace{.035in},\rbrace}: A_0\times A_0\to A_0$
 as follows: for any $a_0,b_0\in A_0$, let $\lbrace{a_0,b_0\rbrace}$ be the image 
of $[a,b]$ in $\hbar A/\hbar^2A\cong A_0$, where $a, b$ are any lifts of $a_0,b_0$ to $A$. (This map is well defined since $A_0$ is commutative.) 
It is well known that $\lbrace{\hspace{.035in},\rbrace}$ is a derivation in each argument, 
which is a Lie bracket (i.e., a Poisson bracket) if $N\ge 2$.

\begin{definition} Given $A_0$, a commutative $k$-algebra with Poisson structure as above, we say that the $N$-th order quantum deformation
$A$ of $A_0$ is an $N$-th order {\it deformation quantization} of the Poisson algebra $(A_0,\lbrace{\hspace{.035in},\rbrace})$.
(If we do not specify the order, then we mean that $N=\infty$.)
\end{definition}

\begin{example} \label{ex1} (1) Take $A_0=k[x,y]$ with Poisson bracket $\{y,x\} = 1$. Then, the Weyl algebra $\bold A_1(k) = k\langle x,y \rangle/(yx-xy-1)$ is a filtered deformation of $A_0$ (with $\deg(x)=0$, $\deg(y)=1$), and gives rise to the quantum formal deformation $A=k[x,y][[\hbar]]$ of $A_0$ with multiplication defined by 
the Moyal formula 
$$
f*g= \sum_{i\ge 0}\frac{\hbar^i}{i!}\partial_y^if \cdot \partial_x^ig.
$$

\noindent (2)  Take $A_0=k[x_1,\dots,x_n]$ with $\{x_i,x_j\} = \lambda_{ij}x_i x_j$, $\lambda_{ij}\in k$. Let $q_{ij}\in 1+\hbar\lambda_{ij}+O(\hbar^2)\in k[[\hbar]]$,
with $q_{ij}q_{ji}=1$. Then, the $\hbar$-adically completed quantum polynomial algebra $A$ generated by $x_1,\dots,x_n$ with relations $x_i x_j = q_{ij} x_j x_i$
 is a quantum formal deformation of $A_0$. 
\medskip

\noindent (3) Take a Lie algebra $\mathfrak{g}$ and let $A_0$ be the symmetric algebra $S(\mathfrak{g})$, with $\{x,y\} = [x, y]_{\mathfrak g}$ for $x,y\in {\mathfrak g}$. 
Then, the enveloping algebra $U(\mathfrak{g})$ is a $\mathbb{Z}_+$-filtered deformation  of $A_0$.
\medskip

\noindent (4) Let $X$ be an abelian variety over $k$, ${\mathcal L}$ be an ample line bundle on $X$, and $\sigma\in \text{Aut}(X(k[[\hbar]]))$ be such that $\sigma=\id\text{ mod }\hbar$. Define the line bundles $\mathcal{L}_n:=\mathcal{L} \otimes \mathcal{L}^\sigma \otimes \cdots \otimes \mathcal{L}^{\sigma^{n-1}}$ on $X$ (with $\mathcal{L}_0~:=~\mathcal{O}_X$). Take $A:=B(X,{\mathcal L},\sigma) =\widehat {\bigoplus}_{n \geq 0} H^0(X, \mathcal{L}_n)$, the $\hbar$-adically completed twisted homogeneous coordinate ring of $X$ (\cite{ATV}). Given an ample line bundle $\mathcal{E}$ on $X$, we have that $\dim H^0(X, \mathcal{E})$ equals the Euler characteristic of $\mathcal{E}$, and hence is deformation-invariant. Therefore, $A$ is a torsion-free, separated, and $\hbar$-adically complete $k[[\hbar]]$-module such that $A/\hbar A=A_0$, i.e., $A\cong A_0[[\hbar]]$ as a $k[[\hbar]]$-module (since a similar statement holds for every homogeneous component of $A$).
Therefore,  $A$ is a quantum formal deformation of a homogeneous coordinate ring $A_0 := \bigoplus_{n \geq 0} H^0(X, \mathcal{L}^{\otimes n})$.
\end{example}


\subsection{Localization of quantum deformations}

\begin{lemma}\label{ore} 
Let $A_0$ be a commutative domain, and let $A_N$ be an $N$-th order quantum  deformation of $A_0$, for $N<\infty$.  Take $S$ to be the set of all regular elements of $A_N$ \textnormal{(}i.e., $S=A_N \setminus \hbar A_N$\textnormal{)}.  Then,
\begin{enumerate}
\item there exists the classical quotient ring $Q(A_N)=S^{-1}A_N$, 
\item $Q(A_N)$ is an $N$-th order deformation of the quotient field $Q(A_0)$, and 
\item $Q(A_N)$ is both left and right Artinian.
\end{enumerate}
\end{lemma}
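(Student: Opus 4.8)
The plan is to verify, in order, that $S$ is a two-sided denominator set of regular elements (giving (1)), that localizing the $\hbar$-adic filtration identifies $Q(A_N)$ as a deformation of $Q(A_0)$ (giving (2)), and finally that this deformation has finite length (giving (3)). First I would record that $S=A_N\setminus\hbar A_N$ consists exactly of the regular elements: writing any nonzero $c\in A_N$ as $c=\hbar^{j}(\overline c_j+\hbar(\cdots))$ with $\overline c_j\neq 0$ in $A_0$ and $0\le j\le N$, the leading term of a product $ac$ (resp.\ $ca$) with $\overline a\neq 0$ is $\hbar^{j}\,\overline a\,\overline c_j\neq 0$ since $A_0$ is a domain; conversely every element of $\hbar A_N$ is annihilated by $\hbar^{N}$. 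In particular $S$ is multiplicatively closed and consists of non-zero-divisors, so the reversibility condition is automatic and it remains to prove the left and right Ore conditions.

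The heart of the matter is the Ore condition, which I would establish by induction on $N$ (noting $A_{N-1}=A_N/\hbar^{N}A_N$ is an $(N-1)$-st order quantum deformation of $A_0$). The base case $N=0$ is trivial since $A_0$ is commutative. For the inductive step, given $a\in A_N$ and $s\in S$, I reduce modulo $\hbar^{N}$ and apply the hypothesis to $A_{N-1}$ to obtain, after lifting, elements $t\in S$ and $c\in A_N$ with $ta-cs=\hbar^{N}z$ for some $z\in A_0\cong\hbar^{N}A_N$. The main obstacle appears here: one cannot in general kill $z$ by the order-$N$ corrections $t\mapsto t+\hbar^{N}t'$, $c\mapsto c+\hbar^{N}c'$, since these only alter $z$ within the ideal $(\overline a,\overline s)\subseteq A_0$, whereas the obstruction $z$ is a Poisson-bracket-type expression (to first order $z=\{\,\overline s,\overline a\,\}$) which need not lie in $(\overline a,\overline s)$ — indeed it can be a unit, as in the Weyl-algebra situation where $\{y,x\}=1$.

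The observation that resolves this is that multiplying the relation on the left by $s$ pushes the obstruction into the \emph{principal} ideal $(\overline s)$: from $s(ta-cs)=\hbar^{N}\overline s\,z$ the pair $(st,sc)$ satisfies $(st)a-(sc)s=\hbar^{N}\overline s\,z$, and now the single correction $sc\mapsto sc+\hbar^{N}z$ yields $(st)a-(sc+\hbar^{N}z)s=\hbar^{N}(\overline s\,z-z\,\overline s)=0$ by commutativity of $A_0$. Since $st\in S$, this proves the left Ore condition; multiplying on the right by $s$ instead gives the right Ore condition symmetrically. Because only $N$ orders occur, the powers of $s$ accumulated as denominators remain in $S$. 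Hence $S$ is a two-sided denominator set and the classical quotient ring $Q(A_N)=S^{-1}A_N$ exists, proving (1).

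For (2), localization at an Ore set is exact, so applying $S^{-1}(-)$ to the filtration $A_N\supseteq\hbar A_N\supseteq\cdots\supseteq\hbar^{N}A_N\supseteq 0$ gives graded pieces $\hbar^{i}Q(A_N)/\hbar^{i+1}Q(A_N)\cong\overline S^{-1}(\hbar^{i}A_N/\hbar^{i+1}A_N)\cong Q(A_0)$, using the freeness of $A_N$ over $R:=k[\hbar]/(\hbar^{N+1})$. Thus $Q(A_N)$ is flat, hence free over the Artinian local ring $R$, with $Q(A_N)/\hbar Q(A_N)\cong Q(A_0)$, so it is an $N$-th order deformation of $Q(A_0)$. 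Finally, for (3), the ideal $\hbar Q(A_N)$ is nilpotent and $Q(A_N)/\hbar Q(A_N)=Q(A_0)$ is a field, so $Q(A_N)$ is local with Jacobson radical $\hbar Q(A_N)$, and the filtration above becomes a composition series of length $N+1$ whose factors are one-dimensional over the residue division ring $Q(A_0)$, hence simple. Therefore $Q(A_N)$ has finite length as a left and as a right module over itself, and so is both left and right Artinian.
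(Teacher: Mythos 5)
Your proof is correct, but it reaches the Ore condition by a genuinely different route than the paper. The paper's proof of (1) is a one-line closed-form identity: since $A_0$ is commutative, ${\rm ad}(s)(a)\in\hbar A_N$, hence ${\rm ad}(s)^{N+1}(a)=0$, which yields the explicit Ore pair
$s^{N+1}a=\bigl(\sum_{j=0}^{N}s^{N-j}\,{\rm ad}(s)^{j}(a)\bigr)s$,
with $t=s^{N+1}\in S$ serving as a uniform denominator. Your argument instead proceeds by induction on $N$, lifting an Ore pair from $A_{N-1}$, identifying the order-$\hbar^{N}$ obstruction $z$, and killing it by first multiplying by $s$ so that the obstruction lands in the principal ideal $(\overline{s})$ and then absorbing it into $c$ via commutativity of $A_0$. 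The two are secretly the same computation: unrolling your induction (whose base case is solved by $\overline{t}=\overline{s}$) accumulates one factor of $s$ per order and reproduces the paper's $s^{N+1}$. What the paper's version buys is brevity and an explicit formula for the Ore pair; what yours buys is a deformation-theoretic reading — it isolates exactly where the naive order-by-order correction fails (the obstruction is Poisson-bracket-type, e.g.\ a unit in the Weyl case, and need not lie in $(\overline{a},\overline{s})$), which the closed-form identity hides. You also supply full details for (2) and (3) — exactness of Ore localization applied to the $\hbar$-adic filtration, flatness hence freeness over the Artinian local ring $k[\hbar]/(\hbar^{N+1})$, and the composition series $Q(A_N)\supseteq\hbar Q(A_N)\supseteq\cdots\supseteq 0$ with simple factors $Q(A_0)$ — where the paper simply asserts that (2) follows easily from (1) and (3) from (2); your verification at the outset that $S=A_N\setminus\hbar A_N$ is precisely the set of regular elements is likewise a detail the paper states parenthetically without proof, and it is needed for $S^{-1}A_N$ to deserve the name \emph{classical} quotient ring.
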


\begin{proof} 
To prove (1), we show that $S$ satisfies both the right and left Ore conditions. Let $a\in A_N$ and $s\in S$. 
Note that ${\rm ad}(s)(a) \in \hbar A$, and so ${\rm ad}(s)^{N+1}a=0$. 
Hence,
$$
s^{N+1}a=\left(\textstyle \sum_{j=0}^N s^{N-j}{\rm ad}(s)^j(a)\right)s,
$$
and $S$ satisfies the left Ore condition. 
The right Ore condition is proved similarly. 
Now (1) follows from Ore's theorem. 

Part (2) follows easily from (1), and (3) follows immediately from (2).
\end{proof} 

Now let $A$ be a quantum formal deformation of $A_0$ (i.e., a deformation of infinite order). Define 
$$
Q(A):=\underleftarrow{\rm lim}~Q(A/\hbar^{N+1}A).
$$

\begin{example} \label{ex:field} If  $A_0$ is a field,    
then $Q(A)=A$ since all elements not in $\hbar A$ are already invertible. 
Therefore, $A[\hbar^{-1}]$ is a division algebra. 
\end{example}

\subsection{Inner-faithful Hopf actions} Recall that a Hopf algebra $H$ {\it acts on an algebra} $B$ ({\it from the left}) if $B$ is a (left) $H$-module algebra, or equivalently, if $B$ is an algebra object in the category of (left) $H$-modules.

\begin{definition}
We say that an action of a Hopf algebra $H$ on an algebra $B$ is {\it inner-faithful} if there does not exist a nonzero Hopf ideal of $H$ that annihilates the $H$-module $B$.
\end{definition}

One can always pass to an inner-faithful Hopf action by considering an action of a quotient Hopf algebra. 

We will need the following  auxiliary result; the standard proofs are omitted.

\begin{lemma} \label{lem:infaith}
Let $H$ be a finite dimensional Hopf algebra.
\begin{enumerate}
\item Suppose that $H$ acts on a $\mathbb{Z}_+$-filtered algebra $\widetilde{A} = \bigcup_{n\geq 0} F^n \widetilde{A}$ so that $F_n \widetilde{A}$ is $H$-stable for all $n \geq 0$. Then, there is an induced $H$-module algebra structure on gr$_F \widetilde{A}$ given by $h \cdot \overline{a} = \overline{(h \cdot a)}_n$ where $a \in F^n \widetilde{A}$ is any lift of $\overline{a} \in F^n \widetilde{A}/F^{n-1}\widetilde{A}$. Also, there is an induced $H$-action on the Rees algebra ${R}(\widetilde{A})$ and  the completed Rees algebra $\widehat{R}(\widetilde{A})$ so that $\hbar^nF^n \widetilde{A}$ is $H$-stable for all $n \geq 0$; this action is inner-faithful if and only if the given $H$-action on $\widetilde{A}$ is inner-faithful.

\smallskip

\item Suppose $H$ acts on a formal deformation $A$ of an algebra $A_0$. If the action of $H$ on $A_0$ 
is inner-faithful, then so is the $H$-action on $A$. The converse holds if $H$ is semisimple.

\end{enumerate}
\end{lemma}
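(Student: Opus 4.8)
The plan is to handle the two parts separately; only the converse in part~(2) requires real work, and it is there that semisimplicity is used.

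For part~(1), I would first check that $h\cdot\overline a:=\overline{(h\cdot a)}_n$ is well defined on $\mathrm{gr}_F\widetilde A$: if a lift $a$ lies in $F^{n-1}\widetilde A$, then $h\cdot a\in F^{n-1}\widetilde A$ by $H$-stability, so its class in $F^n\widetilde A/F^{n-1}\widetilde A$ vanishes; the coassociativity, unit, and module-algebra axioms then descend directly from those on $\widetilde A$, using that multiplication respects the filtration and that each $F^n\widetilde A$ is $H$-stable. For the Rees algebra I would set $h\cdot(\hbar^n a)=\hbar^n(h\cdot a)$ for $a\in F^n\widetilde A$; stability of $F^n\widetilde A$ makes each summand $\hbar^nF^n\widetilde A$ an $H$-submodule, and extending componentwise yields the (continuous) action on $\widehat R(\widetilde A)=\prod_{n\ge0}\hbar^nF^n\widetilde A$. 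For inner-faithfulness, the key observation is that a Hopf ideal $J\subseteq H$ annihilates $R(\widetilde A)$ if and only if it annihilates $\widetilde A$: on each summand $\hbar^nF^n\widetilde A$ the action is merely the restriction of the $\widetilde A$-action to $F^n\widetilde A$, while conversely $\widetilde A\cong R(\widetilde A)/(\hbar-1)$ is a quotient $H$-module of $R(\widetilde A)$. Passing between $R(\widetilde A)$ and $\widehat R(\widetilde A)$ is harmless because $H$ is finite dimensional and acts continuously. This part is routine bookkeeping.

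The forward direction of part~(2) is immediate and needs no hypothesis on $H$: since $A_0=A/\hbar A$ is a quotient $H$-module algebra of $A$, any Hopf ideal annihilating $A$ also annihilates $A_0$, so inner-faithfulness of the action on $A_0$ forces inner-faithfulness on $A$.

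For the converse I would encode the action as an algebra homomorphism $\rho:H\to B:=\End^{\mathrm{cont}}_{k[[\hbar]]}(A)$, noting that the action is automatically $k[[\hbar]]$-linear because $H$ fixes scalars ($h\cdot\hbar=\eps(h)\hbar$), and that $B$ is $\hbar$-adically separated since $A$ is. Reduction modulo $\hbar$ identifies $\overline\rho:H\to B/\hbar B$ with the action on $A_0$. Suppose the latter is not inner-faithful, so $\Ker\overline\rho\supseteq I$ for some nonzero Hopf ideal $I$. Since $H$ is semisimple, $H\cong\prod_i M_{n_i}(k)$ with central idempotents $e_i$, and the two-sided ideal $\Ker\overline\rho$ is a sum of blocks $\bigoplus_{i\in S}M_{n_i}(k)$ with $S\neq\varnothing$. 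For each $i\in S$, $\rho(e_i)$ is an idempotent of $B$ lying in $\hbar B$; but any idempotent $f\in\hbar B$ satisfies $f=f^2\in\hbar^2B$, hence $f\in\bigcap_n\hbar^nB=0$, so $\rho(e_i)=0$. Therefore $\Ker\rho\supseteq\bigoplus_{i\in S}M_{n_i}(k)=\Ker\overline\rho\supseteq I$, that is, $I\cdot A=0$, and the action on $A$ is not inner-faithful. The main obstacle is precisely this step, and it is where semisimplicity is indispensable: it is the block decomposition (equivalently, the central idempotents) of $H$ that lets me lift the vanishing from $A_0$ to $A$, via the fact that $\hbar B$ contains no nonzero idempotent. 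No such mechanism exists for a general finite dimensional $H$, which is why the converse is stated only in the semisimple case.
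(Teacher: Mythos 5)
Your proof is correct, but the converse in part (2) --- the only substantive step --- goes by a genuinely different route than the paper's. The paper simply cites the standard rigidity fact: if $H$ is semisimple and $V$ is a formal deformation of an $H$-module $V_0$, then $V\cong V_0[[\hbar]]$ as $H$-modules; hence a Hopf ideal annihilating $A_0$ also annihilates $A\cong A_0[[\hbar]]$. You instead make the underlying mechanism explicit: encoding the action as $\rho:H\to B=\End^{\mathrm{cont}}_{k[[\hbar]]}(A)$, decomposing $H\cong\prod_i M_{n_i}(k)$, observing that $\Ker\overline{\rho}$ is a sum of blocks, and killing each such block via its central idempotent $e_i$, whose image is an idempotent in $\hbar B$ and hence zero because $f=f^2\in\hbar^2B$ forces $f\in\bigcap_n\hbar^nB=0$ by $\hbar$-adic separatedness. (The separatedness of $B$, or equivalently the direct argument $f(A)=f^n(A)\subseteq\hbar^nA$, does hold here since $A$ is topologically free, so this step is sound; note also that $\rho(e_i)(A)\subseteq\hbar A$ really does give $\rho(e_i)\in\hbar B$ because $A$ is $\hbar$-torsion-free.) In effect you give a self-contained proof of exactly the weaker consequence of the rigidity fact that is needed --- that $\Ker\rho=\Ker\overline{\rho}$, i.e., the annihilators of $A$ and $A_0$ in $H$ coincide for semisimple $H$ --- which buys independence from the cited fact at the cost of not producing the module isomorphism; the paper's citation is shorter, and the standard proof of the cited fact runs on the same fuel (idempotents/vanishing cohomology for semisimple algebras). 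One small caveat: your parenthetical claim that $k[[\hbar]]$-linearity of the action is ``automatic'' from $h\cdot\hbar=\eps(h)\hbar$ is not forced by the module-algebra axioms alone (they only give $h\cdot 1_A=\eps(h)1_A$); rather, $k[[\hbar]]$-linearity and continuity are part of the paper's convention for Hopf actions on formal deformations --- without them the induced action on $A_0=A/\hbar A$ would not even be defined --- so nothing is lost. Part (1) and the forward direction of (2) match what the paper does (the former is omitted there as standard, and your bookkeeping, including the equivalence of inner-faithfulness via $\widetilde{A}\cong R(\widetilde{A})/(\hbar-1)$ and componentwise action on $\widehat{R}(\widetilde{A})$, is fine).
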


\begin{proof}
We will only prove (2). If $I\subset H$ is a Hopf ideal annihilating $A$, then it clearly annihilates
$A_0$, implying the forward direction. 
The converse follows from the following standard fact: if $H$ is 
a semisimple algebra and $V$ a formal deformation of an $H$-module $V_0$ then 
$V$ is isomorphic to $V_0[[\hbar]]$ as an $H$-module.    
\end{proof}

\begin{remark} The converse in Lemma \ref{lem:infaith}(2) may fail if $H$ is not semisimple, as shown by  
 \cite[Example~3.2(d)]{CWWZ}.
 \end{remark} 


\section{The main results} \label{mainresults}

In this section we present the main results, including the results highlighted in Table~1, along with Theorem~\ref{fieldthm} which is needed for the proof of Theorem~\ref{maint}. The proof of Theorem~\ref{fieldthm} is postponed to the next section.

First, we obtain the following generalization of  \cite[Proposition 5.4]{EW1}.

\begin{proposition} \label{ssresult}
If $H_0$ is a semisimple Hopf algebra and $A_0$ is a commutative domain, then the action of $H_0$
 on a quantum formal deformation $A$ of $A_0$ factors through a group action.  
 \end{proposition}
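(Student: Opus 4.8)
The plan is to reduce the statement to the already-established No Semisimple Finite Quantum Symmetry property for commutative domains, \cite[Theorem~1.3]{EW1}, by transporting information between $A$ and its classical limit $A_0 = A/\hbar A$. First I would replace the given action by an inner-faithful one: let $J \subset H_0$ be the largest Hopf ideal annihilating the $H_0$-module $A$, and set $\overline{H}_0 := H_0/J$, so that $\overline{H}_0$ acts inner-faithfully on $A$. Since a quotient Hopf algebra of a semisimple Hopf algebra is again semisimple, $\overline{H}_0$ is semisimple. Because the $H_0$-action on $A$ factors through $\overline{H}_0$, it suffices to prove that $\overline{H}_0$ is a group algebra $kG$; then the original action factors through the group algebra $kG$, as desired.

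To show $\overline{H}_0 \cong kG$, I would pass to the classical limit. The action of $\overline{H}_0$ on $A$ descends to $A_0 = A/\hbar A$, which is a commutative domain. Applying the converse direction of Lemma~\ref{lem:infaith}(2) --- valid precisely because $\overline{H}_0$ is semisimple --- the inner-faithfulness of the $\overline{H}_0$-action on the formal deformation $A$ yields inner-faithfulness of the induced $\overline{H}_0$-action on $A_0$. Now $A_0$ is a commutative domain and $\overline{H}_0$ is semisimple, so \cite[Theorem~1.3]{EW1} guarantees that the $\overline{H}_0$-action on $A_0$ factors through some group algebra $kG$, i.e.\ there is a surjection of Hopf algebras $\overline{H}_0 \twoheadrightarrow kG$ whose kernel annihilates $A_0$. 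That kernel is a Hopf ideal of $\overline{H}_0$, and by inner-faithfulness on $A_0$ it must be zero; hence $\overline{H}_0 \cong kG$ is a group algebra, completing the argument.

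The crux of the argument is the transfer of inner-faithfulness from $A$ down to $A_0$, and this is exactly where semisimplicity is essential: the converse in Lemma~\ref{lem:infaith}(2) can fail without it (cf.\ \cite[Example~3.2(d)]{CWWZ}). Once this descent is in hand, no genuinely new computation is required --- the deformation-quantization structure enters only through the fact that $A_0$ is the classical limit, and the heavy lifting is done by the commutative-domain case \cite[Theorem~1.3]{EW1}. Accordingly, the main subtlety to verify carefully is that the $H_0$-action indeed descends to $A_0$ (i.e.\ that $\hbar A$ is $H_0$-stable) and that inner-faithfulness is tracked correctly through the quotient $H_0 \to \overline{H}_0$; the remainder is a formal combination of the two cited results.
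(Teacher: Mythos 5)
Your proof is correct and follows essentially the same route as the paper's: pass to an inner-faithful action, use the semisimple converse of Lemma~\ref{lem:infaith}(2) to descend inner-faithfulness to $A_0$, and conclude via \cite[Theorem~1.3]{EW1}. You merely spell out the details the paper leaves implicit (semisimplicity of the quotient $\overline{H}_0$, stability of $\hbar A$, and the vanishing of the kernel of $\overline{H}_0\twoheadrightarrow kG$), all of which are verified correctly.
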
  

\begin{proof}
Without loss of generality, we may assume that the $H_0$-action on $A$ is inner-faithful. 
Since $H_0$ is semisimple, by Lemma \ref{lem:infaith}(2) the induced action of $H_0$ on $A_0$ is inner-faithful. Hence, $H_0$ is a finite group algebra  by \cite[Theorem~1.3]{EW1}. \end{proof} 

We would like to generalize this result to the case when $H_0$ is not necessarily semisimple and, still more generally, to the case when we have an action of a formal deformation $H$ of a finite dimensional Hopf algebra $H_0$. In this case, {\it nontrivial} actions of $H_0$ on a commutative domain $A_0$ (that is, ones not factoring through a group action) are possible; see e.g., \cite{EW2}. We want to see when these actions can lift to actions of $H$ on $A$. 

Recall that $A_0$ carries a Poisson bracket induced by the deformation $A$, and by virtue of being a biderivation, this bracket extends uniquely to the quotient field $Q(A_0)$. The following theorem shows that a nontrivial action of $H_0$ on $A_0$ cannot lift if the induced Poisson bracket 
on the fraction field $Q(A_0)$ has trivial center; the proof is presented in Section~\ref{pf3.2}.

\begin{theorem}\label{fieldthm} Let $H$ be a formal deformation 
of a finite dimensional Hopf algebra $H_0$ which acts on a quantum formal deformation 
$A$ of a commutative domain $A_0$. If the Poisson center of $Q(A_0)$ 
is trivial \textnormal{(}i.e., $\lbrace f,g\rbrace=0$ for all $g\in Q(A_0)$ implies $f\in k$\textnormal{)}, 
then the induced action of $H_0$ on $A_0$ 
factors through a group action. 
\end{theorem}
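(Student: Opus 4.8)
The plan is to reduce to the case where $A_0$ is a \emph{field}, exploit the triviality of the Poisson center to pin down the center of the resulting division algebra, and then show that the non-grouplike part of the induced $H_0$-action is forced to vanish. \emph{Step 1 (Reductions).} Replacing $H$ by its quotient by the largest Hopf ideal annihilating $A$, I may assume the $H$-action on $A$ is inner-faithful; since a surjection of Hopf algebras takes group-algebra quotients to group-algebra quotients, this is harmless for the conclusion. The induced Poisson bracket on $A_0$, being a biderivation, extends to $F:=Q(A_0)$, and the $H_0$-action on $A_0$ factors through a group if and only if the induced action on $F$ does (grouplikes act by automorphisms preserving $A_0$). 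Using Lemma~\ref{ore} at each finite order and extending the Hopf action to the Ore localization by the standard device that $\sum (h\1\cdot s)(h\2\cdot s^{-1})=\eps(h)1$ determines the action on inverses, I pass to $A_0=F$ a field and $A=Q(A)$. By Example~\ref{ex:field}, $D:=A[\hbar^{-1}]$ is then a division algebra over $K:=k((\hbar))$.

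\emph{Step 2 (Centrality from the Poisson hypothesis).} I claim $Z(D)=K$. If $z\in A$ is central, then $[z,a]=0$ for all $a\in A$, so $\bar z\in F$ satisfies $\{\bar z,\bar a\}=0$ for every $\bar a$; triviality of the Poisson center gives $\bar z\in k$. Subtracting this scalar and dividing by $\hbar$ (the difference lies in $\hbar A$ and is again central), and iterating with $A\cong F[[\hbar]]$ separated and complete, one gets $z\in k[[\hbar]]$. Hence $Z(A)=k[[\hbar]]$ and $Z(D)=K$, so $D$ is a \emph{central} division algebra over $K$ carrying an inner-faithful, $K$-linear action of the finite-dimensional $K$-Hopf algebra $\mathcal H:=H[\hbar^{-1}]$.

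\emph{Step 3 (Killing the non-grouplike part).} Each grouplike $g\in G(H_0)$ acts on $F$ by an automorphism $\sigma:=\rho_0(g)$, and these will furnish the desired group; the content is that the skew-primitive (and, inductively along the coradical filtration, higher) part contributes nothing further. The essential case is a $(1,g)$-skew-primitive $x_0$ acting by a $\sigma$-derivation $D_0$ of $F$, which I must show is zero; note that a genuine primitive ($g=1$) already vanishes, since a finite-dimensional Hopf algebra in characteristic zero has no nonzero primitives. One useful input is that in characteristic zero a field admits no nonzero nilpotent derivation: from the top of the Jordan filtration one finds $a$ with $\delta a=1$, whence $\delta^k(a^k)=k!\neq0$; this already forces $D_0=0$ whenever the Hopf structure makes $D_0$ nilpotent. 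In general, however — and this is where the deformation is indispensable — I lift $x_0$ to any $x\in H$ and view $D_0$ as the $\hbar\to0$ shadow of a skew-derivation of the \emph{noncommutative} algebra $A$ governed by the deformed coproduct $\Delta_H(x)$; expanding the skew-Leibniz identity to first order and using $[s,a]\equiv \hbar\{\bar s,\bar a\}\pmod{\hbar^2A}$ ties $D_0$ to the Poisson bracket, and centrality of $D$ (equivalently, triviality of the Poisson center) forces $D_0=0$. That the hypothesis is genuinely needed is shown by $F=k(t)$: there the Poisson bracket, hence its center, is everything, and the Taft algebra acts by a nonzero $q$-derivation that does not factor through a group.

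\emph{Conclusion and main obstacle.} With every skew-primitive, and inductively every higher coradical, part acting trivially, the image $\rho_0(H_0)$ is spanned by the automorphisms $\rho_0(g)$, $g\in G(H_0)$, so the $H_0$-action on $F$, and by restriction on $A_0$, factors through a group. The main obstacle is Step~3: since $D$ is central over $K$ but typically of infinite dimension over $K$, Skolem--Noether does not apply to make all derivations inner, so the vanishing of the skew-derivation part must be extracted directly from the \emph{deformed} coproduct of $H$ together with the centrality of $D$, and this must then be promoted from skew-primitives to the full coradical filtration, including the case where $H_0$ is not pointed (so that its coradical is not a group algebra and the inductive bookkeeping is genuinely more delicate).
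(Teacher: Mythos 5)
You have correctly reproduced the paper's first reduction (localize via Lemma~\ref{ore} and the Skryabin--van Oystaeyen extension of the action to $Q(A)$, so that $A_0$ becomes a field and $D=A[\hbar^{-1}]$ a division algebra as in Example~\ref{ex:field}), and your Step~2 computation that $Z(A)=k[[\hbar]]$ is correct. But Step~3 --- which you yourself flag as the main obstacle --- is a plan, not a proof, and it is precisely where the theorem lives. A lift $x\in H$ of a $(1,g)$-skew-primitive $x_0\in H_0$ satisfies only $\Delta(x)=x\otimes 1+g\otimes x+O(\hbar)$ with an uncontrolled tail spread over all of $H\otimes H$, so there is no ``skew-Leibniz identity'' for $x$ to expand to first order; the proposal never exhibits the asserted identity tying $D_0$ to the Poisson bracket, and (as you concede) centrality of $D$ alone cannot force $D_0=0$, since Skolem--Noether is unavailable in infinite dimension over the center. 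The inductive scaffolding is also unsound even granting the skew-primitive case: for non-pointed $H_0$ the coradical is not a group algebra and need not be a Hopf subalgebra, so there is no ``higher coradical part'' to kill termwise; and even for pointed $H_0$, vanishing of all skew-primitive actions yields factorization through $G(H_0)$ only if $H_0$ is generated in coradical degree one, which is not known in general. The net effect is that the Poisson-center hypothesis is never deployed in a checkable way at the decisive moment.

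The paper's actual argument avoids all structure theory of $H_0$. It introduces the subfield $K\subseteq A_0$ of \emph{liftable} invariants (those $a_0\in A_0^{H_0}$ lifting to $A^H$) and proves $A_0$ is algebraic over $K$ (Lemma~\ref{fini}), using the Bergen--Cohen--Fischman bound $[D:D^{H[\hbar^{-1}]}]\le \dim H_0$ from \cite{BCF} applied to the division algebra $D$; note this is a bound over the \emph{invariants}, not over the center, which is why your Step~2 does not substitute for it. It then encodes the entire action in the image $B$ of the Galois map $\beta(f\otimes g)=(f\otimes 1)\rho(g)$, a coideal subalgebra of $A_0\otimes H_0^*$, and reduces via \cite[Lemma~3.3]{CEW2} to showing $B$ is defined over $k$. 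The Poisson hypothesis enters only through Lemma~\ref{poiscom} --- liftable invariants Poisson-commute with the coaction components $\rho_i$, proved by projecting $\rho_i([a,f])=[a,\rho_i(f)]$ to $\hbar A/\hbar^2 A$ --- which makes the Pl\"ucker coordinates $p_{IJ}(\bold B)$ of $B$ Poisson-commute with $K$, hence with all of $A_0$ by the algebraicity lemma, hence lie in $k$ by triviality of the Poisson center. If you wish to salvage your outline, the missing mechanism is exactly this replacement: instead of inducting along the coradical filtration, show the Galois image $B$ is defined over $k$, for which the two genuinely new inputs are the liftable-invariant field $K$ with Lemma~\ref{fini}, and Lemma~\ref{poiscom}.
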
 

Using Theorem \ref{fieldthm}, we prove our main result, which is the following theorem.  

\begin{theorem}\label{maint} Let $H_0$ be a finite dimensional Hopf algebra 
which acts on a quantum formal deformation 
$A$ of a commutative domain $A_0$. If the Poisson center of $Q(A_0)$ 
is trivial, then the action of $H_0$ on $A$ 
factors through a group action. 
\end{theorem}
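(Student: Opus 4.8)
The plan is to deduce Theorem~\ref{maint} from Theorem~\ref{fieldthm} by regarding the given $H_0$-action on $A$ as an action of the \emph{trivial} formal deformation $H := H_0[[\hbar]]$ of $H_0$, extending the action $k[[\hbar]]$-linearly. First I would reduce to the inner-faithful case: replacing $H_0$ by the quotient $\bar H := H_0/I$, where $I$ is the largest Hopf ideal annihilating $A$, the assertion ``the $H_0$-action on $A$ factors through a group action'' becomes equivalent to ``$\bar H$ is a group algebra.'' Applying Theorem~\ref{fieldthm} to $\bar H$ (with its trivial deformation) shows that the induced $\bar H$-action on $A_0$ factors through a group action, i.e.\ there is a Hopf surjection $\pi\colon \bar H \twoheadrightarrow kG$ onto a finite group algebra whose kernel $J$ annihilates $A_0$. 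The goal is thereby reduced to showing $J=0$, equivalently that $J$ annihilates all of $A$, since inner-faithfulness of the $\bar H$-action then forces $J=0$ and $\bar H \cong kG$.

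The subtlety is that $J$ annihilating $A_0 = A/\hbar A$ only yields $J\cdot A \subseteq \hbar A$, and in general $J\cdot A \ne 0$: this is exactly the failure of the converse of Lemma~\ref{lem:infaith}(2) for non-semisimple Hopf algebras (cf.\ the Remark following that lemma). To close the gap I would argue by leading order in $\hbar$. Suppose $J \ne 0$; since $\bigcap_N \hbar^N A = 0$ and the $\bar H$-action is inner-faithful, there is a minimal $N \ge 1$ with $J\cdot A \subseteq \hbar^N A$ but $J\cdot A \not\subseteq \hbar^{N+1}A$. Reducing $\hbar^{-N}(x\cdot -)$ modulo $\hbar$ defines, for each $x\in J$, a linear operator $\psi_x$ on $A_0$, nonzero for at least one $x$. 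A short computation with the module-algebra axiom and $\Delta(J) \subseteq J\ot \bar H + \bar H \ot J$ shows that $\psi$ kills $J^2$ (because $xy\cdot A \subseteq \hbar^{2N}A \subseteq \hbar^{N+1}A$ for $N\ge 1$), and that one may choose $x\in J$ skew-primitive modulo $J$, for which $\psi_x$ is a $(g,1)$-skew derivation of the commutative domain $A_0$. One then shows, using precisely the triviality of the Poisson center of $Q(A_0)$ that drives the proof of Theorem~\ref{fieldthm}, that every such $\psi_x$ must vanish; concretely, one rescales the $J$-part of the action by $\hbar^{-N}$ so as to realize $\psi$ as the classical limit of an action of an auxiliary formal deformation of a Hopf quotient, and reapplies Theorem~\ref{fieldthm}. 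Either way $\psi\equiv 0$, contradicting the minimality of $N$. Hence $J\cdot A \subseteq \bigcap_N \hbar^N A = 0$, so $J=0$.

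Finally, $\bar H \cong kG$ is a group algebra, so the original $H_0$-action on $A$ factors through the group action of $G$, as desired. I expect the main obstacle to be the leading-order step of the second paragraph: unlike in Proposition~\ref{ssresult}, where semisimplicity of $H_0$ lets Lemma~\ref{lem:infaith}(2) transport inner-faithfulness from $A$ down to $A_0$ for free, here $H_0$ is arbitrary and one must genuinely re-inject the Poisson-geometric input to rule out a skew-primitive element acting trivially on $A_0$ but nontrivially at higher order in $\hbar$. The delicate points are identifying the leading term $\psi_x$ as an honest (skew-)derivation of $A_0$ and setting up the rescaled deformation cleanly, so that Theorem~\ref{fieldthm} can be reapplied as a black box rather than its proof repeated; the remaining bookkeeping is formal.
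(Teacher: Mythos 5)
Your proposal follows essentially the same route as the paper's proof: reduce to an inner-faithful action, apply Theorem~\ref{fieldthm} to the trivial deformation $H:=H_0[[\hbar]]$ to conclude that the annihilator $J$ of $A_0$ (the paper's $I$) is a Hopf ideal with $H_0/J\cong kG$, and then rule out $J\cdot A\neq 0$ by passing to the leading order in $\hbar$ and reapplying Theorem~\ref{fieldthm} to a rescaled auxiliary deformation. That skeleton is exactly right, and your choice of $N$ (the paper's $s$) and the contradiction with its maximality are as in the paper.

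However, the step you yourself flag as delicate is where your sketch is imprecise, in three concrete ways. First, the auxiliary object is not a deformation of a Hopf \emph{quotient}: the paper forms the Rees algebra $H'=\sum_{m=0}^{r-1}\hbar^{-mN}J^m[[\hbar]]\subset H[\hbar^{-1}]$ of the $J$-adic (radical) filtration, which is a formal deformation of the \emph{associated graded} Hopf algebra $\mathrm{gr}\,H_0=\bigoplus_m J^m/J^{m+1}$ --- in general a different Hopf algebra from any quotient of $H_0$, and one must check $\Delta(H')\subseteq H'\otimes H'$ using $\Delta(J)\subseteq J\otimes H_0+H_0\otimes J$. Second, for $H'$ to be a flat deformation of a \emph{finite dimensional} Hopf algebra, as Theorem~\ref{fieldthm} requires, you need $J$ nilpotent; this is not automatic and is where inner-faithfulness enters a second time: since $J^mA\subseteq\hbar^{mN}A$, the Hopf ideal $J^\infty=\bigcap_m J^m$ annihilates $A$, hence $J^\infty=0$ and $J^r=0$ for some $r$ (so $J=\mathrm{Rad}(H_0)$). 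Your proposal never establishes this, and without it the rescaled algebra is not even $\hbar$-adically separated. Third, your skew-primitive reduction is both unjustified as stated (``skew-primitive modulo $J$'' is not meaningful; one would have to decompose $J/J^2$ as a $kG$-bicomodule to produce $(g,h)$-skew-primitives) and unnecessary: once $H'$ acts on $A$, reduction mod $\hbar$ gives an action of $\mathrm{gr}\,H_0$ on $A_0$; Theorem~\ref{fieldthm} forces this action to factor through a group algebra, so the nilpotent Hopf ideal $\mathrm{gr}\,J$ acts by zero on $A_0$, contradicting $x a=\hbar^N b$ with $b\notin\hbar A$, which gives $x_0a_0=b_0\neq 0$ for the symbol $x_0\in\mathrm{gr}\,J$. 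In short, your plan is the paper's plan; supplying these three points --- the identification of the classical limit as $\mathrm{gr}\,H_0$, the nilpotency of $J$ via inner-faithfulness, and the all-at-once treatment of $J$ in place of the skew-primitive detour --- is precisely what converts your sketch into the actual proof.
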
 

\begin{proof} Without loss of generality, we may assume that the action of $H_0$ on $A$ is inner-faithful. 

Let $I$ be the annihilator of $A_0$ as an $H_0$-module, i.e., the 
set of $x\in H_0$ such that $xA\subset \hbar A$. The action of  $H:=H_0[[\hbar]]$ (the trivial deformation) 
on $A$ satisfies the assumptions of Theorem \ref{fieldthm}. 
Thus, by Theorem \ref{fieldthm}, the action of $H_0$ on $A_0$ factors through a group algebra; in other words, $H_0/I=kG$ for some finite group $G$. 
In particular, $I$ is a Hopf ideal. Then, $I^\infty:=\bigcap_{m\ge 0}I^m$ is a Hopf ideal in $H_0$ acting trivially on $A$. So $I^\infty=0$ by inner-faithfulness. Hence, there is $r>0$ 
such that $I^r=0$; let us take the smallest such $r$. 
Since $I$ is a nilpotent ideal and $H_0/I$ is semisimple, we get that $I={\rm Rad}(H_0)$. So the radical of $H_0$ is a Hopf ideal. 

Our job is to show that $I$ acts by zero on $A$ (then it would follow that $H_0=kG$). Assume the contrary. Let $s$ be the largest integer such that $IA\subset \hbar^s A$
(it exists since we have assumed that $IA\ne 0$). Consider $H':=\sum_{m=0}^{r-1} \hbar^{-ms}I^m[[\hbar]]\subset H[\hbar^{-1}]$ (where $I^0=H_0$); it is the Rees algebra of $H_0$ with respect to the decreasing filtration by powers of $I$, with $\deg(I)=s$. Since $I$ is a Hopf ideal, we have $\Delta(I)\subset H\otimes I+I\otimes H$. Hence 
$$\Delta(\hbar^{-ms}I^m)\subset \sum_{p+q=m}(\hbar^{-mp}I^p)\otimes (\hbar^{-mq}I^q),$$ so $\Delta(H')\subset H'\otimes H'$, and we obtain that $H'$ is a Hopf algebra. Furthermore, $H'$ is a formal deformation of the Hopf algebra 
${\rm gr} H_0:=\bigoplus_{m=0}^{r-1}I^m/I^{m+1}$, 
the associated graded algebra of $H_0$ under the radical filtration
(which, in this case, is a Hopf algebra filtration, as Rad($H_0$) is a Hopf ideal of $H_0$). 
Moreover, by definition $H'$ acts on $A$. Hence ${\rm gr} H_0$ acts 
on $A_0$ by reducing modulo $\hbar$. 

By Theorem \ref{fieldthm}, the action of ${\rm gr} H_0$ on $A_0$ must factor through a group algebra. In particular, the radical ${\rm gr} I$ (which is a Hopf ideal of ${\rm gr} H_0$) acts by zero on $A_0$. 

On the other hand, by our assumption, there exists $x\in I$ and $a\in A$
such that $xa=\hbar^s b$, where $b$ has a nonzero image $b_0$ in $A_0$. 
Then, $(\hbar^{-s}x)a=b$. So, denoting by $x_0$ the image of $\hbar^{-s}x\in \hbar^{-s}I\subset H'$ in ${\rm gr} I\subset {\rm gr} H_0$, and denoting by $a_0$ the image of $a$ in $A_0$, we obtain $x_0a_0=b_0\ne 0$. 
This means that ${\rm gr} I$ acts by nonzero on $A_0$, a contradiction. 
The theorem is proved. 
\end{proof} 

\begin{corollary}\label{corol}  Let $\widetilde{A}$ be a $\Bbb Z_+$-filtered algebra such that $A_0={\rm gr} \widetilde{A}$
is a commutative domain. Suppose that a finite dimensional 
Hopf algebra $H$ acts on $\widetilde{A}$ preserving the filtration of $\widetilde{A}$. If the Poisson center of $Q(A_0)$ is trivial, then the action of $H$ factors through a group action. 
\end{corollary}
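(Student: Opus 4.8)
The plan is to reduce the corollary to Theorem~\ref{maint} via the completed Rees algebra construction, turning the statement about a filtered deformation $\widetilde{A}$ into one about a genuine quantum formal deformation of $A_0$. Concretely, I would set $A := \widehat{R}(\widetilde{A}) = \prod_{n\ge 0}\hbar^n F^n\widetilde{A}$. As recorded in the paragraph following the definition of the Rees algebra, $A$ is a homogeneous formal deformation of $A_0 = {\rm gr}\,\widetilde{A}$ with $\deg(\hbar)=1$; since $A_0$ is a commutative domain by hypothesis, $A$ is in fact a quantum formal deformation of $A_0$ in the sense of Definition~\ref{def1}(c).

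First I would pass to the inner-faithful case: replacing $H$ by a quotient Hopf algebra, I may assume without loss of generality that the action of $H$ on $\widetilde{A}$ is inner-faithful, which does not affect whether the action factors through a group. Next, by Lemma~\ref{lem:infaith}(1) the $H$-action on $\widetilde{A}$ induces an $H$-action on $A = \widehat{R}(\widetilde{A})$ for which each $\hbar^n F^n\widetilde{A}$ is $H$-stable, and---crucially---this induced action on $A$ is inner-faithful if and only if the given action on $\widetilde{A}$ is. Thus the induced action of $H$ on the quantum formal deformation $A$ is inner-faithful.

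It then remains to observe that the Poisson bracket on $A_0$ induced by the deformation $A$ is exactly the bracket coming from the filtration of $\widetilde{A}$ (up to the harmless degree shift built into the Rees construction), so the hypothesis that the Poisson center of $Q(A_0)$ is trivial applies verbatim to $A$. I would then invoke Theorem~\ref{maint} for the inner-faithful $H$-action on $A$: triviality of the Poisson center of $Q(A_0)$ forces this action to factor through a group action. Combined with inner-faithfulness, this means $H = kG$ for some finite group $G$, and therefore the original action of $H$ on $\widetilde{A}$ factors through a group action, as desired.

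The main point to get right---rather than a genuine obstacle---is the bookkeeping in the previous paragraph: verifying that the Poisson structure induced on $A_0$ by $\widehat{R}(\widetilde{A})$ coincides with the one in the hypothesis, and confirming that ``factors through a group action'' together with inner-faithfulness indeed yields $H = kG$ (which is precisely how the inner-faithful reduction is used inside the proof of Theorem~\ref{maint}). Once these identifications are in place, the corollary follows formally, with Lemma~\ref{lem:infaith}(1) handling the transfer of inner-faithfulness and Theorem~\ref{maint} doing all the real work.
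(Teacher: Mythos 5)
Your proposal is correct and follows essentially the same route as the paper: reduce to an inner-faithful action, transfer it to the completed Rees algebra $\widehat{R}(\widetilde{A})$ via Lemma~\ref{lem:infaith}(1), and apply Theorem~\ref{maint}. Your extra bookkeeping about identifying the induced Poisson bracket is fine (and left implicit in the paper), so nothing is missing.
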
 

\begin{proof} Without loss of generality, we assume that  $H$ acts on $\widetilde{A}$ inner-faithfully. Since the $H$-action on $\widetilde{A}$ preserves the filtration of $\widetilde{A}$, it extends to an inner-faithful $H$-action on the completed Rees algebra $\widehat{R}(\widetilde{A})$ by Lemma~\ref{lem:infaith}(1). Now $H$ is a finite group algebra by Theorem~\ref{maint}.
\end{proof}

\begin{remark} Suppose that $A_0$ is a finitely generated commutative domain, that is, $A_0={\mathcal O}(X)$, the algebra of regular functions on some irreducible affine variety $X$ over $k$. Then, the condition that the Poisson center of $Q(A_0)=k(X)$ is trivial holds, in particular, when the induced Poisson bracket 
on $X$ is generically symplectic (i.e., there exists a dense smooth affine open set $U\subset X$ and a closed nondegenerate 2-form $\omega$ on $U$ such that $\lbrace{f,g\rbrace}=(df\otimes dg,\omega^{-1})$ for any $f,g\in {\mathcal O}(X)$). For example, one may take $X$ to be any affine symplectic variety, 
and $A$ a deformation quantization of $\mathcal{O}(X)$ (e.g., Fedosov's quantization); see \cite{BK}. 
\end{remark} 

\begin{example} The condition  in Theorem \ref{fieldthm} and Theorem \ref{maint} that the Poisson center of $Q(A_0)$ is trivial cannot be replaced by a weaker condition 
that the Poisson center of $A_0$ is trivial. For example, 
consider the quantum polynomial algebra $A$ with generators 
$x,y,z$ and relations $xy=qyx$, $xz=qzx$, $zy=qyz$, 
where $q=\exp(\hbar)$. Then, the induced Poisson bracket on 
$A_0=k[x,y,z]$ is given by $\lbrace{x,y\rbrace}=xy$, 
$\lbrace{z,y\rbrace}=yz$, $\lbrace{x,z\rbrace}=xz$, 
and it is easy to see that the Poisson center 
of $A_0$ is trivial. On the other hand, the Poisson center of $Q(A_0)$ contains the element $xy/z$. 

Let $H_0$ be the Sweedler Hopf algebra 
with grouplike generator $g$ such that $g^2=1$ and $(1,g)$-skew-primitive generator $a$ such that $ga=-ag$ and $a^2=0$. Define an action of $H_0$ on $A$ by 
$$
g\cdot x=x,\quad g\cdot y=y,\quad g\cdot z=-z,\quad  a\cdot x=0,\quad a\cdot y=0, \quad a\cdot z=xy.
$$ 
It is easy to check that this action is well defined, and does not factor through a group algebra, even 
after reducing modulo $\hbar$. 
\end{example}


\section{Proof of Theorem \ref{fieldthm}} \label{pf3.2}

 Since $H$ acts on $A$,  it acts on $A/\hbar^{N+1}A$ for any $N$. Hence, $H$ acts on the classical quotient ring $Q(A/\hbar^{N+1}A)$ by \cite[Theorem~2.2]{SV},  and
by taking the inverse limit in $N$, we get an action of $H$ on 
$Q(A)$. Thus, without loss of generality we may assume that $A_0$ is a field.

One of the main steps of the proof is to show that many invariants 
in $A_0^{H_0}$ lift to invariants in $A^H$. Namely, 
let us say that an element $a_0\in A_0^{H_0}$ is a {\it liftable invariant} 
if there exists $a\in A^H$ equal to $a_0$ modulo $\hbar$. 
\medskip

\noindent {\it Notation} ($K$). Let $K \subset A_0$ be 
the subset (in fact, subfield) of liftable invariants under the action of $H_0$.

\begin{lemma}\label{fini} The field $A_0$ is an algebraic extension of $K$.
\end{lemma}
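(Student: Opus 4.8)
We want to show that $A_0$ (now assumed to be a field, by the reduction in the opening paragraph of Section~4) is algebraic over the subfield $K$ of liftable invariants. The plan is to argue by contradiction: suppose $A_0$ is not algebraic over $K$, so there exists some $a_0 \in A_0$ transcendental over $K$. The strategy is to produce, out of such a transcendental element, a new liftable invariant that is \emph{not} algebraic over $K$, contradicting the maximality implicit in the definition of $K$, or more precisely to show that the $H_0$-orbit structure forces enough invariants to lift that $A_0$ must be algebraic over what lifts.

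**The averaging/integral approach.**
First I would recall that $H_0$ is finite dimensional, so it carries a two-sided integral, but since $H_0$ need not be semisimple we cannot naively average to project onto invariants. Instead, the key tool should be that $H=H_0[[\hbar]]$ (or a formal deformation) acts on the field $A=A_0[[\hbar]]$-level object $A$, and that $A^H$ reduces mod $\hbar$ into $A_0^{H_0}$. The heart of the matter is a lifting statement: given $a_0 \in A_0^{H_0}$, one wants to solve for $a \in A^H$ with $a \equiv a_0 \pmod \hbar$ order by order in $\hbar$. At each order the obstruction to extending an $n$-th order invariant lift to order $n+1$ lives in a cohomology-type group, and since $A_0$ is a \emph{field}, such obstruction spaces tend to vanish or can be killed after enlarging by algebraic elements. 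So the concrete plan is: take $a_0$, attempt the order-by-order lift, and show the obstructions can always be resolved \emph{after} replacing $a_0$ by a suitable power or polynomial in $a_0$ with coefficients forced to be algebraic over $K$.

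**The main obstacle.**
The hard part will be handling the non-semisimplicity of $H_0$: without a well-behaved averaging projector, the lift of an invariant need not exist for $a_0$ itself, only for some element algebraic over the $\langle a_0\rangle$-generated field. I expect the technical crux is to show that the set of $a_0\in A_0$ admitting a lift to $A^H$ after passing to a finite algebraic extension is \emph{all} of $A_0$ — equivalently, that every element of $A_0$ satisfies a polynomial relation over $K$. One clean way to force this: use that the action of the finite dimensional $H_0$ means each $a_0$ generates a finite dimensional $H_0$-submodule of $A_0$, so the orbit $H_0 \cdot a_0$ spans a finite dimensional space; the symmetric functions (coefficients of the "minimal polynomial'' of $a_0$ over the invariants) are themselves $H_0$-invariant, and I would argue these symmetric-function invariants are precisely the ones that lift. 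Thus $a_0$ is algebraic over the liftable invariants $K$, which is exactly the claim.

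**Summary of steps.**
To recap the order of operations: (i) invoke the reduction to $A_0$ a field; (ii) for arbitrary $a_0 \in A_0$, use finite-dimensionality of $H_0$ to produce a finite dimensional $H_0$-stable subspace containing $a_0$, and form the monic polynomial whose roots are the $H_0$-orbit, giving a relation $a_0^m + c_{m-1}a_0^{m-1}+\dots+c_0 = 0$ with $c_i \in A_0^{H_0}$; (iii) show each coefficient $c_i$ is a liftable invariant, i.e.\ lies in $K$, by solving the $\hbar$-adic lifting problem for these particular invariant elements; (iv) conclude $a_0$ is algebraic over $K$, hence $A_0$ is algebraic over $K$. I anticipate step (iii) is where the real work lies and where the triviality of the Poisson center will ultimately be used (in the companion arguments of the full proof of Theorem~\ref{fieldthm}), though for this lemma it may suffice to lift just the invariant coefficients without the full Poisson-center hypothesis.
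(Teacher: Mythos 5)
There is a genuine gap, and it sits exactly where you predicted: your step (iii). Your plan is to produce invariant elements of $A_0$ (the coefficients of an ``orbit polynomial'') and then lift them $\hbar$-adically to $A^H$, hoping the obstructions vanish because $A_0$ is a field. But for non-semisimple $H_0$ there is no reason such obstructions vanish --- invariants of $H_0$ on $A_0$ genuinely need not lift to invariants of $H$ on $A$, and the lemma is carefully stated so as not to require this: $K$ may well be a proper subfield of $A_0^{H_0}$. Nothing about $A_0$ being a field kills the relevant cohomology (e.g.\ the Sweedler algebra has nontrivial extensions between modules over any field), so your order-by-order lifting scheme has no mechanism to succeed for the specific invariants you construct. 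A second, independent problem is the orbit polynomial itself: $H_0$ does not act by automorphisms, so the elements of an $H_0$-stable finite dimensional subspace are not a set of roots permuted by the action, and the symmetric functions of a basis of $H_0\cdot a_0$ are not $H_0$-invariant in general. That construction is special to group actions (Artin's lemma) and is not available here.

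The paper's proof runs in the opposite direction, which is what makes it work: instead of lifting invariants from $A_0$ up to $A$, it produces invariants upstairs and pushes them down. Since $A_0$ is a field, $D:=A[\hbar^{-1}]$ is a division algebra over $k((\hbar))$ (Example~\ref{ex:field}) on which $H[\hbar^{-1}]$ acts, and the Bergen--Cohen--Fischman theorem \cite[Corollary~2.3]{BCF} --- the correct Hopf-algebra replacement for Artin's lemma --- gives $[D:D^{H[\hbar^{-1}]}]\le d=\dim H_0$. Hence any lift $x\in A$ of $x_0\in A_0$ satisfies a monic equation of degree $\le d$ with coefficients $b_i\in D^{H[\hbar^{-1}]}$. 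Rescaling by $\hbar^{-m}$, where $m\le 0$ is the minimal $\hbar$-adic valuation of the $b_i$, and projecting to $\hbar^m A/\hbar^{m+1}A\cong A_0$ yields a nontrivial algebraic equation for $x_0$ whose coefficients are reductions of genuine $H$-invariants in $A$ --- so they lie in $K$ \emph{by definition}, with no lifting problem to solve. In short, your outline identifies the right conclusion but relies on two steps (the orbit polynomial and the obstruction-vanishing lift) that fail for non-semisimple Hopf actions; the missing idea is the $[D:D^H]\le\dim H$ bound for division algebras.
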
  

\begin{proof}
Let $d :=\dim H_0 = \dim_{k((\hbar))} H[\hbar^{-1}]$. Let $D :=A[\hbar^{-1}]$, which is a division algebra over $k((\hbar))$ 
by Example~\ref{ex:field}. Further, $H[\hbar^{-1}]$ acts $k((\hbar))$-linearly 
on $D$. Thus, by \cite[Corollary~2.3]{BCF}, 
$D$ has dimension $\le d$ over $D^{H[\hbar^{-1}]}$ as a left vector space. 
Now let $x_0\in A_0$ and $x\in A$ be its lift to $A$. 
As $[D:D^{H[\hbar^{-1}]}]\le d$, we have that $x$ satisfies an equation 
\begin{equation}\label{poleq}
b_0x^n+b_1x^{n-1}+ \cdots+b_n=0,
\end{equation}
where $b_0=1$, $b_i\in D^{H[\hbar^{-1}]}$ and $n\le d$. Let $m$ be the smallest value of the $\hbar$-adic valuation of $b_i$ in $D$ (over all $i$); clearly, $m\le 0$. Projecting \eqref{poleq}
to $\hbar^mA/\hbar^{m+1}A$, we get a nontrivial equation 
\begin{equation} \label{algeb}
c_0x_0^s+c_1x_0^{s-1}+\cdots+c_s=0
\end{equation}
of possibly lower degree $s\le n$. Note that $c_i\in K$ by definition, so $x_0$ 
is algebraic over $K$. 
\end{proof} 

Now we proceed with the proof of Theorem~\ref{fieldthm}. Consider the Galois map 
$$\beta: A_0\otimes A_0\to A_0\otimes H_0^*, \quad f\otimes g \mapsto (f\otimes 1)\rho(g),$$ 
where $\rho: A_0\to A_0\otimes H_0^*$ 
is the coaction map. Then, $$B:={\rm Im}\beta$$ is a commutative coideal subalgebra 
in the Hopf algebra $A_0\otimes H_0^*$ (regarded as a finite dimensional Hopf algebra over $A_0$); the commutativity is clear and the coideal subalgebra condition follows from an argument similar to \cite[Lemma~3.2]{EW1}. Moreover, by \cite[Lemma~3.3]{CEW2} it suffices to show that

\bigskip

\noindent ($\dag$) \hspace{.6in}
\begin{tabular}{c}  $B$ is defined over $k$, that is,
$B=A_0\otimes B_0$, where $B_0$ is a subalgebra of $H_0^*$.
\end{tabular}

\bigskip

Let $\lbrace{h_i\rbrace}$ be a basis of $H_0$, and let $\lbrace{h_i^*\rbrace}$ be the dual basis 
of $H_0^*$. Then for $f \in A_0$
$$
\rho(f)= \textstyle \sum_{i=1}^d \rho_i(f)\otimes h_i^*,
$$ 
where 
$\rho_i: A_0\to A_0$. 

\begin{lemma}\label{poiscom} Suppose $a_0\in K$ is a liftable invariant. Then for any $f_0\in A_0$ and all $i$, one has 
$$
\rho_i(\lbrace{a_0,f_0\rbrace})=\lbrace{a_0,\rho_i(f_0)\rbrace}.
$$
\end{lemma}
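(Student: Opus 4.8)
The plan is to reinterpret the claim as the statement that the Hamiltonian derivation $\{a_0,-\}$ is $H_0$-equivariant whenever $a_0$ is a liftable invariant, and then to prove this by lifting to $A$ and computing with commutators. First I would unwind the notation: since $\{h_i^*\}$ is the basis of $H_0^*$ dual to $\{h_i\}$, pairing the second tensor leg of $\rho(f)=\sum_i \rho_i(f)\ot h_i^*$ against $h_i\in H_0$ recovers the action, so $\rho_i(f)=h_i\cdot f$. Thus the asserted identity $\rho_i(\{a_0,f_0\})=\{a_0,\rho_i(f_0)\}$ is precisely $h_i\cdot\{a_0,f_0\}=\{a_0,h_i\cdot f_0\}$ for every $i$, i.e.\ each operator $h_i\cdot(-)$ --- and hence the whole $H_0$-action --- commutes with $\{a_0,-\}$.

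The key algebraic input is an identity at the level of the deformation $A$. Because $a_0\in K$ is liftable, choose an $H$-invariant lift $a\in A^H$, so that $h\cdot a=\eps(h)\,a$ for all $h\in H$, where $\eps$ is the counit of $H$. Using that $A$ is an $H$-module algebra over $k[[\hbar]]$ together with the counit axiom, I would compute in Sweedler notation
\[
h\cdot(af)=\sum (h_{(1)}\cdot a)(h_{(2)}\cdot f)=\sum \eps(h_{(1)})\,a\,(h_{(2)}\cdot f)=a\,(h\cdot f),
\]
and similarly $h\cdot(fa)=(h\cdot f)\,a$, whence
\[
h\cdot[a,f]=[a,\,h\cdot f]\qquad\text{for all } h\in H,\ f\in A.
\]
In words: commuting with an $H$-invariant element intertwines the $H$-action.

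Finally I would pass from $A$ back to the Poisson bracket on $A_0$ by reduction modulo $\hbar$. Since $A_0$ is commutative, $[a,f]\in\hbar A$; writing $[a,f]=\hbar c$ with $c=\hbar^{-1}[a,f]\in A$, the class $\bar c$ is by definition $\{a_0,f_0\}$, so $c$ is a lift of $\{a_0,f_0\}$. Picking a lift $\tilde h_i\in H$ of $h_i$ and using that the action is $k[[\hbar]]$-linear (hence commutes with multiplication by $\hbar^{-1}$) together with the displayed identity,
\[
\tilde h_i\cdot c=\hbar^{-1}\bigl(\tilde h_i\cdot[a,f]\bigr)=\hbar^{-1}[a,\,\tilde h_i\cdot f].
\]
Reducing modulo $\hbar$ then gives $h_i\cdot\{a_0,f_0\}=\{a_0,\overline{\tilde h_i\cdot f}\}=\{a_0,h_i\cdot f_0\}$, which is the claim.

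I expect the only real subtlety to lie in the $\hbar$-bookkeeping of the last step: checking that $\hbar^{-1}[a,f]$ genuinely lands in $A$, that the $H$-action is $k[[\hbar]]$-linear so that it commutes with dividing by $\hbar$, and that the resulting reductions modulo $\hbar$ are independent of the chosen lifts, so that they indeed compute the $H_0$-action and the Poisson bracket on $A_0$. The genuinely essential hypothesis, rather than an obstacle, is the availability of the $H$-invariant lift $a\in A^H$: this is exactly what $a_0\in K$ provides, and it is what makes the commutator identity --- and therefore the equivariance of $\{a_0,-\}$ --- possible.
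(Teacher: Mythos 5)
Your proof is correct and is essentially the paper's argument: both lift $a_0$ to an $H$-invariant $a\in A^H$ and $f_0$ to $f\in A$, establish the commutator identity (the paper's $\rho_i([a,f])=[a,\rho_i(f)]$, which via $\rho_i(f)=h_i\cdot f$ is exactly your $h\cdot[a,f]=[a,h\cdot f]$), and then project to $\hbar A/\hbar^2 A\cong A_0$. The only difference is cosmetic --- you phrase the computation through the $H$-action with the Sweedler/counit argument, while the paper phrases it through the coaction --- and your careful $\hbar$-bookkeeping (torsion-freeness, lift-independence) just makes explicit what the paper leaves implicit.
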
 
 
\begin{proof}
Let us fix an isomorphism $H\cong H_0[[\hbar]]$ as $k[[\hbar]]$-modules, 
and by abusing notation, denote the coaction of $H^*$ on $A$ also by $\rho$ and its components by $\rho_i$. Let $a$ be a lift of $a_0$ to $A^H$, 
and let $f$ be a lift of $f_0$ to $A$. We have 
$$
\rho_i([a,f])=[a,\rho_i(f)].
$$
Projecting this equation to $\hbar A/\hbar^2A \cong A_0$, we obtain the desired statement. 
\end{proof} 

Introduce the following notation. Let $r:=\dim B$, and $v_1,\dots,v_r$ be elements of $A_0$ such that $\rho(v_1),\dots,\rho(v_r)$ are linearly independent, and hence form a basis of $B$ over $A_0$. 
Let $h_1, \dots, h_d$ be a basis of $H_0$, and let $\bold B:=(b_{ij})$ be the matrix representing $B$ in the Grassmannian  \linebreak ${\rm Gr}_r(A_0 \otimes H_0^*)=:{\rm Gr}_r(d)$ of $r$-dimensional subspaces in a $d$-dimensional space with respect to these bases. Namely,  $\rho(v_i) = \sum_j b_{ij} \otimes h_j^*$ where $b_{ij}=\rho_j(v_i) \in A_0$.

Recall that the homogeneous coordinate ring of ${\rm Gr}_r(d)$ under the Pl\"ucker embedding is generated by the minors $\Delta_I$ of an $r$-by-$d$ matrix
attached to subsets $I\subset \lbrace{1,\dots,d\rbrace}$ with $|I|=r$. Pick $I$ so that $\Delta_I(\bold B)\ne 0$. 
Let $J\subset \lbrace{1,\dots,d\rbrace}$ with $|J|=r$ be such that $|J\cap I|=r-1$. Then, the Pl\"ucker coordinates 
$p_{IJ}:=\Delta_J/\Delta_I$ are rational functions on ${\rm Gr}_r(d)$ which form a local coordinate system near $\bold B$. 

Note that $B$ is defined over $k$ precisely when $B \in {\rm Gr}_r(H_0^*) \subset {\rm Gr}_r(A_0 \otimes H_0^*)$. So property $(\dag)$ is equivalent to the property that for all $J$, the ratios $p_{IJ}(\bold B)$ lie in 
$k$, which is what remains to be shown.  

To this end, let $a_0\in K$ be a liftable invariant. Since the vectors $\rho(v_i)$ form a basis of $B$, there exists an $r$-by-$r$ matrix $\bold C=(c_{im})$ with $c_{im}\in A_0$, such that 
$$
\rho(\lbrace{a_0,v_i\rbrace})=\textstyle \sum_m c_{im}\rho(v_m).
$$
By Lemma \ref{poiscom}, 
$$
\sum_j \{a_0, \rho_j(v_i)\} \otimes h_j^* = \sum_{m,j} c_{im} \rho_j(v_m) \otimes h_j^*. 
$$
So,
$$ 
\lbrace{a_0,b_{ij}\rbrace}=\textstyle \sum_m c_{im}b_{mj}. 
$$
This implies that $\lbrace{a_0,\Delta_I(\bold B)\rbrace}=\Tr(\bold C)\Delta_I(\bold B)$, and thus 
\begin{equation} \label{eq3}
\lbrace{a_0,p_{IJ}(\bold B)\rbrace}= \frac{1}{\Delta_I(\bold B)^2} \biggl(\Delta_I(\bold B) \{a_0, \Delta_J(\bold B) \} - \Delta_J(\bold B) \{a_0, \Delta_I(\bold B)\}\biggr) = 0. 
\end{equation}

Now by Lemma~\ref{fini}, any $f \in A_0$ satisfies an equation $c_0f^s + c_1f^{s-1} + \cdots + c_0 =0$ for some $c_i \in K$, with $s$ minimal. Since the Poisson bracket is a biderivation, we have 
$$
0  = \lbrace{\textstyle \sum_{i=0}^s c_{s-i}f^i, p_{IJ}(\bold B)\rbrace}
 \overset{\eqref{eq3}}{=} \left(\textstyle \sum_{i=1}^{s} ic_{s-i}f^{i-1} \right)\lbrace{f, p_{IJ}(\bold B)\rbrace}.$$
Since $s$ is minimal, $\textstyle \sum_{i=1}^{s} ic_{s-i}f^{i-1} \neq 0$. This implies that $\lbrace{f,p_{IJ}(\bold B)\rbrace}=0$ for any $f\in A_0$. Finally, since the Poisson center of $A_0$ is trivial, we obtain that $p_{IJ}(\bold B)\in k$. Theorem~\ref{fieldthm} is proved.

\begin{remark} One can generalize the main results of this article by replacing the induced Poisson bracket on $A_0$ with the {\it induced Poisson bracket of depth $m$} as follows.

Let $A$ be a noncommutative formal deformation of $A_0$, and let $m$ be the largest integer such that $[a,b]\in \hbar^m A$ for all $a,b\in A$. Given $a_0,b_0\in A_0$, pick lifts $a,b$ of $a_0,b_0$ to $A$, and consider the projection $\lbrace{a_0,b_0\rbrace}$ of $[a,b]$ to $\hbar^m A/\hbar^{m+1}A$. Then, it is well known that $\lbrace{~ ,\rbrace}$ is a nonzero Poisson bracket for $A_0$; let us call it the {\it induced Poisson bracket of depth $m$}. The same construction applies to filtered deformations, by passing to the completed Rees algebra. 

This generalizes the above setting, in which  $m=1$. More precisely, the usual 
induced Poisson bracket is the bracket of depth $1$. If it turns out to be zero,
then we can define the Poisson bracket of depth 2. If it also turns out to be zero, 
then we can define a Poisson bracket of depth 3, and so on, until we reach
some depth $m$ where the bracket is nonzero (which will necessarily happen if 
$A$ is noncommutative). 
  
Now Theorem \ref{fieldthm}, Theorem \ref{maint}, and Corollary \ref{corol} generalize  to this setting in a straightforward fashion, with the same proofs. In other words, if the Poisson center of $Q(A_0)$ with respect to 
a Poisson bracket of any depth $m$ is trivial, then the appropriate Hopf action must factor
through a group action.   
\end{remark}

\section*{Acknowledgments}
The authors were supported by the National Science Foundation: NSF-grants DMS-1502244 and DMS-1550306.

\end{document}